\documentclass[10pt,a4paper]{article}
\usepackage[utf8]{inputenc}
\usepackage[T1]{fontenc}
\usepackage[english]{babel}
\usepackage{amsmath,amssymb,amsthm,mathtools}
\usepackage{bm}
\usepackage{geometry}
\usepackage{hyperref}
\geometry{margin=1in}
\hypersetup{colorlinks=true,linkcolor=blue,citecolor=blue,urlcolor=blue}
\usepackage{tikz}

\newtheorem{theorem}{Theorem}
\newtheorem{lemma}{Lemma}
\newtheorem{corollary}{Corollary}

\title{The Narrow Corridor of Stable Solutions in an Extended Osipov--Lanchester Model with Constant Total Population}

\author{Sergey Salishev\\
aifoundry.org, San Francisco, CA, USA\\
\texttt{salishev[at]aifoundry.org}}
\date{}

\begin{document}
\maketitle

\begin{abstract}
This paper considers a modification of the classical Osipov--Lanchester model in which the total population of the two forces $N=R+B$ is preserved over time. It is shown that the dynamics of the ratio $y=R/B$ reduce to the Riccati equation $\dot y=\alpha y^2-\beta$, which admits a complete analytical study. The main result is that asymptotically stable invariant sets in the positive quadrant $R,B\ge 0$ exist exactly in three sign cases of $(\alpha,\beta)$: (i) $\alpha<0,\beta<0$ (stable interior equilibrium), (ii) $\alpha=0,\beta<0$ (the face $B=0$ is stable), (iii) $\alpha<0,\beta=0$ (the face $R=0$ is stable). For $\alpha>0$ or $\beta>0$ the solutions reach the boundaries of applicability of the model in finite time. Moreover, $\alpha<0,\beta<0$ corresponds to exponential growth of solutions in the original system. Passing to a model perturbed in $\alpha(t),\beta(t)$ requires buffer dynamics repelling from the axes to preserve stability of the solution.
\end{abstract}

\section{Introduction}
The intuitive illustration of confrontation and cooperation between the state and society in the book ``The Narrow Corridor''~\cite{AcemogluRobinson2019} presumes the existence of a stable balance of forces, as well as the possibility of transitions to extreme regimes. The book contains a large number of historical examples; however, it does not pose the question of mathematical formalization and qualitative investigation of models. It is interesting to compare Acemoglu's intuitive dynamics with a simple mathematical model exhibiting the same qualitative behavior. It is known that the classical Osipov--Lanchester competition model with parameters $\alpha > 0, \beta > 0$ describes the mutual attrition of two forces and has no stable solutions \cite{Lanchester1916,Helmbold1993}. This raises the question of extending the Osipov--Lanchester model to all values of $\alpha, \beta$ that, without essential changes of behavior in the region $\alpha > 0, \beta > 0$, would contain stable solutions in the region $\alpha \leq 0$ or $\beta \leq 0$, corresponding to cooperative regimes in Acemoglu's intuitive dynamics. That would be in the spirit of V. Arnold's view on soft models in the social sciences~\cite{Arnold1998HardSoftEN}.

An extended Osipov--Lanchester model with a constant total population is proposed
\begin{equation}\label{eq:system}
\dot R=-\beta B+\mu R,\qquad
\dot B=-\alpha R+\mu B,\qquad
N=R+B=\text{const}.
\end{equation}

The passage from the absolute variables $(R,B)$ to the dimensionless shares
$r=R/N$, $b=B/N$ with $N=R+B$ is a standard normalization in population and
competition dynamics: it separates \emph{composition} (how the total is split
between the two sides) from \emph{scale} (the overall magnitude of the system).
The impact of scale is not discarded: once the relative dynamics is understood,
absolute growth/decay can be analyzed separately in the original system; see Theorem~\ref{thm:premium}
for the resulting ``cooperation premium'' in absolute magnitudes.

The existing literature contains several mathematical formalizations of confrontational interaction between the state and society: extensions of Lanchester equations to irregular conflicts \cite{Deitchman1962,Kress2018Threeway}, dynamic protest models \cite{Volpert2024}, the structural-demographic theory of macro-social cycles \cite{Turchin2003}.

The form of equation~(\ref{eq:system}) is typical for models with growth or reinforcements in differential games with nonzero-sum~\cite{DocknerJorgensenLongSorger2000, YeungPetrosyan2012}, but $\mu$ usually corresponds to the influence of external factors and is not state-dependent.

Without loss of generality, the system with $N=R+B=1$ can be regarded as a generalized replicator with a state-dependent payoff matrix (a special case with a fixed payoff matrix is the predator-prey Lotka--Volterra system~\cite{Bomze1995NewIssues}). Unlike the more general treatments of models of this type in~\cite{HofbauerSigmund1998,Sandholm2010}, an explicit reduction to the Riccati equation provides a complete analysis of the behavior of solutions for all parameter values~\cite{Arnold1992OrdinaryDifferentialEquations}.

In view of the above, a simple closed system with a constant sum and an explicit classification of stable regimes appears useful 
and, to the author's knowledge, has not been published in this form. The novelty is that a single model preserves the Osipov--Lanchester structure, remains analytically tractable, and covers both confrontational and cooperative interaction regimes, matching Acemoglu's intuitive dynamics of the 
``narrow corridor''~\cite{AcemogluRobinson2019}.
In particular, the ``narrowness'' is made explicit in Theorem~\ref{thm:struct-narrow}, which derives a closed-form stability 
condition and error bound for the ratio dynamics under bounded time-varying perturbations of $\alpha$ and $\beta$. Throughout the paper,
political-economic terms such as ``technological progress'' are used as interpretive labels for mathematically defined regimes, rather than 
as normative or empirical claims.

\paragraph{Main technical results:}
\begin{enumerate}

\item It is proved (Theorem~\ref{thm:classification}) that for $R,B\ge 0$ and constant $N=R{+}B$ asymptotically stable invariant 
sets exist only when $\alpha\le 0,\ \beta\le 0$, and in this case:
\begin{enumerate}
    \item for $\alpha<0,\beta<0$ there exists a unique interior asymptotically stable equilibrium point;
    \item for $\alpha=0,\beta<0$ the boundary $B=0$ becomes stable; 
    \item for $\alpha<0,\beta=0$ the boundary $R=0$ becomes stable.
\end{enumerate}
In all other cases the solutions reach the boundaries $R=0$ or $B=0$ in finite time, which corresponds to the classical Osipov--Lanchester formulation. 

\item When passing to the model perturbed in $\alpha(t),\beta(t)$, it is necessary to have buffer dynamics repelling from the axes in order for stability to hold (Theorem~\ref{thm:struct-narrow}).

\item It is proved that for $\alpha<0,\beta<0$ and unbounded resources the solutions of the Osipov--Lanchester system grow asymptotically faster than in the other cases (Theorem~\ref{thm:premium}).

\end{enumerate}

\section{Problem Statement and Relation to the Classical Osipov--Lanchester Equations}
Consider the original Osipov--Lanchester model:
\begin{equation}\label{eq:OL}
\dot R=-\beta B,\qquad
\dot B=-\alpha R .
\end{equation}

This model only makes sense for parameters $\alpha > 0, \beta > 0$, and in this parameter region it has no stable solutions~\cite{Arnold1998HardSoftEN}: trajectories in the positive orthant reach the coordinate axes in finite time (mutual attrition).

Extending the model to arbitrary signs of $\alpha$ and $\beta$ leads to the emergence of exponentially growing solutions. To bound the growth of the solutions, the analysis passes to relative values of the population sizes. For this purpose the sum and shares are introduced
\[
N=R+B,\qquad r=\frac{R}{N},\qquad b=\frac{B}{N},\qquad r+b=1.
\]
Then
\begin{equation}\label{eq:Ndot}
\dot N=\dot R+\dot B=-\beta B-\alpha R
=-N\big(\alpha r+\beta b\big).
\end{equation}
Denote
\begin{equation}\label{eq:mu_rb}
\mu:=\alpha r+\beta b\quad\Rightarrow\quad \dot N=-\mu\,N.
\end{equation}

Derivatives of the shares (quotient rule):
\begin{align}\label{eq:quotent}
\dot r&=\frac{\dot R}{N}-r\,\frac{\dot N}{N}
      =\frac{-\beta B}{N}-r(-\mu)
      =-\beta\,b+\mu\,r,\\
\dot b&=\frac{\dot B}{N}-b\,\frac{\dot N}{N}
      =\frac{-\alpha R}{N}-b(-\mu)
      =-\alpha\,r+\mu\,b.
\end{align}
The equivalent system in variables $(r,b,N)$ takes the form
\begin{equation}\label{eq:shares_system}
\dot r=-\beta b+\mu r,\qquad
       \dot b=-\alpha r+\mu b,\qquad
       \dot N=-\mu N,\qquad
       \mu=\alpha r+\beta b,\ \ r+b=1.
\end{equation}

This system is rewritten in the following generalized form
\[
\dot R=-\beta B+\mu R,\qquad
\dot B=-\alpha R+\mu B,\qquad
N=R+B=\text{const},
\]
where $\alpha,\beta\in\mathbb{R}$ are interaction parameters, and $\mu(t)$ is chosen so as to preserve the sum $N$. From $\dot N=\dot R+\dot B=0$ it follows that
\begin{equation}\label{eq:mu}
\mu(t)=\frac{\alpha R+\beta B}{N}.
\end{equation}
Here $\mu$ appears in two closely related but conceptually different ways.
In the original model \eqref{eq:OL} the normalization $N=R+B$ yields the
\emph{induced} scale rate $\mu:=\alpha r+\beta b$ through $\dot N=-\mu N$
\eqref{eq:shares_system}.
In the constant-sum extension \eqref{eq:system} the construction instead \emph{chooses} the
time-dependent term $\mu(t)$ according to \eqref{eq:mu} so as to enforce
$\dot N=0$ and thus obtain a closed dynamics on the simplex of shares.
Equivalently, $\mu(t)$ acts as a balancing (normalizing) feedback that cancels
the scale drift produced by the raw interaction terms and isolates the dynamics
of the relative variables $x=R/N$ (or $y=R/B$).
If this balancing term is removed, i.e.\ if one sets $\mu\equiv 0$ in
\eqref{eq:system}, then \eqref{eq:system} reduces to the classical
Osipov--Lanchester system \eqref{eq:OL}.

It is convenient to switch to the share $x=\frac{R}{N}\in[0,1]$ and the ratio $y=\frac{R}{B}=\frac{x}{1-x}$ (for $B>0$). Then from \eqref{eq:system}, \eqref{eq:mu} one obtains
\begin{equation}\label{eq:x}
\dot x=(\alpha-\beta)x^2+2\beta x-\beta,
\end{equation}
and for the ratio the Riccati equation follows
\begin{equation}\label{eq:riccati}
\dot y=\alpha y^2-\beta.
\end{equation}
\begin{lemma}[Dynamics of the share and the ratio]\label{lem:x-y-dynamics}
Consider the system~(\ref{eq:system})
\[
\dot R=-\beta B+\mu R,\qquad
\dot B=-\alpha R+\mu B,\qquad
N=R+B=\text{const},
\]
with $\alpha,\beta\in\mathbb{R}$ and $\mu(t)$ given by~(\ref{eq:mu})
\[
\mu(t)=\frac{\alpha R+\beta B}{N}.
\]
Define the share $x=\frac{R}{N}\in[0,1]$ and, for $B>0$, the ratio
$y=\frac{R}{B}=\frac{x}{1-x}$. Then $x$ satisfies~(\ref{eq:x})
\[
\dot x=(\alpha-\beta)x^2+2\beta x-\beta,
\]
and $y$ satisfies the Riccati equation~(\ref{eq:riccati})
\[
\dot y=\alpha y^2-\beta.
\]
\end{lemma}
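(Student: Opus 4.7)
The plan is to verify both identities by direct substitution, the only structural observation being that the normalising feedback $\mu(t)$ enters the $R$- and $B$-equations symmetrically and therefore drops out of any ratio quantity.

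For the share $x=R/N$: since $N$ is preserved along trajectories (the choice of $\mu$ in \eqref{eq:mu} enforces $\dot N=0$), one has $\dot x=\dot R/N$. Substituting $B=(1-x)N$ into the first equation of \eqref{eq:system} and using
\[
\mu=\alpha\,\tfrac{R}{N}+\beta\,\tfrac{B}{N}=\alpha x+\beta(1-x)=(\alpha-\beta)x+\beta,
\]
I obtain $\dot x=-\beta(1-x)+\bigl((\alpha-\beta)x+\beta\bigr)x$, which expands to $(\alpha-\beta)x^{2}+2\beta x-\beta$. This gives \eqref{eq:x}.

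For the ratio $y=R/B$: the cleanest route is to apply the quotient rule directly to $y$ rather than to go through $x$. The key algebraic observation is that $\mu$ multiplies $R$ in $\dot R$ and $B$ in $\dot B$ with the same scalar, so in the combination $\dot R\,B-R\,\dot B$ the two $\mu RB$ terms cancel exactly, leaving
\[
\dot y=\frac{(-\beta B)B-R(-\alpha R)}{B^{2}}=\alpha\Bigl(\frac{R}{B}\Bigr)^{\!2}-\beta=\alpha y^{2}-\beta,
\]
which is the claimed Riccati equation \eqref{eq:riccati}.

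As a consistency check one can alternatively derive $\dot y$ from $\dot x$ using $y=x/(1-x)$, $x=y/(1+y)$, $1-x=1/(1+y)$, and $\dot y=\dot x/(1-x)^{2}$; this amounts to the routine identity $\bigl((\alpha-\beta)x^{2}+2\beta x-\beta\bigr)/(1-x)^{2}=\alpha y^{2}-\beta$ and adds no conceptual content. There is no genuine obstacle in the proof; the only point worth emphasising is that the cancellation of the normalising term $\mu$ in the ratio dynamics is precisely what collapses the coupled planar system \eqref{eq:system}, on the level of $y$, to a single autonomous Riccati equation with constant coefficients -- a structural fact on which the classification theorems in the rest of the paper rest.
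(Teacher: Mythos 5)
Your proof is correct and follows essentially the same route as the paper's: compute $\dot x=\dot R/N$ with $\mu=\alpha x+\beta(1-x)$ substituted in, and apply the quotient rule to $y=R/B$ so that the two $\mu RB$ terms cancel. The extra consistency check via $\dot y=\dot x/(1-x)^2$ is fine but, as you note, adds nothing beyond the direct computation.
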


\begin{proof}
Since $N=R+B$ is constant, it follows that
\[
x=\frac{R}{N}\quad\Longrightarrow\quad \dot x=\frac{\dot R}{N}.
\]
Using \eqref{eq:system} gives
\begin{equation}\label{eq:dx-step1}
\dot x=\frac{-\beta B+\mu R}{N}
      =-\beta\,\frac{B}{N}+\mu\,\frac{R}{N}.
\end{equation}
Because $x=\frac{R}{N}$ and $R+B=N$, it follows that
\[
\frac{R}{N}=x,\qquad \frac{B}{N}=1-x.
\]
Hence \eqref{eq:dx-step1} becomes
\begin{equation}\label{eq:dx-step2}
\dot x=-\beta(1-x)+\mu x.
\end{equation}
Next $\mu$ from \eqref{eq:mu} is rewritten in terms of $x$:
\[
\mu
=\frac{\alpha R+\beta B}{N}
=\alpha\frac{R}{N}+\beta\frac{B}{N}
=\alpha x+\beta(1-x)
=\beta+(\alpha-\beta)x.
\]
Substituting this expression for $\mu$ into \eqref{eq:dx-step2} yields
\[
\dot x
=-\beta(1-x)+x\bigl[\beta+(\alpha-\beta)x\bigr]
=-\beta+\beta x+\beta x+(\alpha-\beta)x^2.
\]
Collecting like terms yields
\[
\dot x=(\alpha-\beta)x^2+2\beta x-\beta,
\]
which is exactly \eqref{eq:x}.

For $y$, the definition $y=\frac{R}{B}$ (with $B>0$) is used. Differentiating,
\[
\dot y=\frac{\dot R\,B-R\,\dot B}{B^2}.
\]
Substituting $\dot R$ and $\dot B$ from \eqref{eq:system},
\[
\dot R=-\beta B+\mu R,\qquad
\dot B=-\alpha R+\mu B,
\]
gives
\begin{align*}
\dot y
&=\frac{(-\beta B+\mu R)B-R(-\alpha R+\mu B)}{B^2} 
=\frac{-\beta B^2+\mu RB+\alpha R^2-\mu RB}{B^2} \\
&=\frac{\alpha R^2-\beta B^2}{B^2} 
=\alpha\left(\frac{R}{B}\right)^2-\beta
=\alpha y^2-\beta,
\end{align*}
which is \eqref{eq:riccati}. This completes the proof.
\end{proof}

\section{Solution via the Riccati Equation and Stability Analysis}
\subsection{Explicit solutions}
Equation \eqref{eq:riccati} can be integrated in elementary functions in all sign configurations of $\alpha,\beta$.

\begin{theorem}[Solutions for $y=R/B$]\label{prop:solutions}
Let $y(0)=y_0>0$ and consider
\[
  \dot y=\alpha y^2-\beta .
\]
Introduce the shorthand
\[
  \kappa:=\sqrt{|\alpha\beta|},\qquad 
  \rho:=\sqrt{\Bigl|\frac{\beta}{\alpha}\Bigr|},
\]
(with $\rho$ used only when $\alpha\neq 0$).
\begin{enumerate}
\item If $\alpha>0$ and $\beta>0$, then on the maximal interval of existence
\begin{equation}\label{eq:y_pp}
y(t)=\rho\,
\coth\!\Big(\operatorname{artanh}\!\big(\rho/y_0\big)-\kappa t\Big).
\end{equation}

\item If $\alpha<0$ and $\beta<0$, then
\begin{equation}\label{eq:y_mm}
y(t)=\rho\,
\tanh\!\Big(\operatorname{artanh}\!\big(y_0/\rho\big)+\kappa t\Big).
\end{equation}

\item If $\alpha>0$ and $\beta<0$, then (on its maximal interval of existence, up to the first pole of $\tan$)
\begin{equation}\label{eq:y_pn}
y(t)=\rho\,
\tan\!\Big(\arctan\!\big(y_0/\rho\big)+\kappa t\Big).
\end{equation}

\item If $\alpha<0$ and $\beta>0$, then (on its maximal interval of existence, until the first time the argument reaches $0$)
\begin{equation}\label{eq:y_np}
y(t)=\rho\,
\tan\!\Big(\arctan\!\big(y_0/\rho\big)-\kappa t\Big).
\end{equation}

\item If $\alpha=0$, then $y(t)=y_0-\beta t$; if $\beta=0$, then
\[
y(t)=\frac{y_0}{1-\alpha y_0 t}.
\]
\end{enumerate}
\end{theorem}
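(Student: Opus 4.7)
The plan is to integrate the Riccati equation $\dot y=\alpha y^2-\beta$ by separation of variables in each of the five sign regimes of $(\alpha,\beta)$, then solve explicitly for $y(t)$ using the initial condition $y(0)=y_0$. The form of antiderivative is dictated by the sign of $\alpha\beta$: hyperbolic when $\alpha\beta>0$, trigonometric when $\alpha\beta<0$, and elementary when one parameter vanishes. Throughout I would use $\rho=\sqrt{|\beta/\alpha|}$ and the observation $|\alpha|\rho=\sqrt{|\alpha\beta|}=\kappa$ to collapse the two stated constants into the correct positions in the final formulas.

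For the two hyperbolic cases I would proceed as follows. When $\alpha<0,\beta<0$, rewrite $\alpha y^2-\beta=|\alpha|(\rho^2-y^2)$ and integrate using $\int dy/(\rho^2-y^2)=\rho^{-1}\operatorname{artanh}(y/\rho)$; solving for $y$ and fitting $y_0$ at $t=0$ yields \eqref{eq:y_mm} directly. When $\alpha>0,\beta>0$, rewrite $\alpha y^2-\beta=\alpha(y^2-\rho^2)$; on the branch $y_0>\rho$ that the stated formula encodes, the antiderivative is $-\rho^{-1}\operatorname{arcoth}(y/\rho)$, and the identity $\operatorname{arcoth}(v)=\operatorname{artanh}(1/v)$ puts the constant of integration in the form $\operatorname{artanh}(\rho/y_0)$, from which the $\coth$ presentation \eqref{eq:y_pp} follows.

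For the two trigonometric cases, the polynomial $\alpha y^2-\beta$ has no real roots, so I would write it as $\pm|\alpha|(y^2+\rho^2)$ and integrate using $\int dy/(y^2+\rho^2)=\rho^{-1}\arctan(y/\rho)$. The sign of $\alpha$ then determines whether $\kappa t$ is added (case $\alpha>0,\beta<0$) or subtracted (case $\alpha<0,\beta>0$) inside the argument of $\tan$, producing \eqref{eq:y_pn} and \eqref{eq:y_np} after inverting. The two degenerate cases are immediate: $\alpha=0$ reduces to $\dot y=-\beta$ and integrates to $y_0-\beta t$; $\beta=0$ separates via $-1/y+1/y_0=\alpha t$ to give $y_0/(1-\alpha y_0 t)$.

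The argument is essentially routine; the only real subtlety is the branch choice in case 1, where $y=\pm\rho$ are equilibria and trajectories with $y_0>\rho$ blow up in finite time $t^{\ast}=\kappa^{-1}\operatorname{artanh}(\rho/y_0)$, while trajectories with $0<y_0<\rho$ decrease monotonically to $0$ in finite time—the $\coth$ form is adapted to the former. Analogous maximal-interval considerations for cases 3 and 4 (first pole of $\tan$; first time the argument reaches zero) are already flagged in the theorem statement, so no additional work is required beyond the four separation-of-variables computations and the bookkeeping of $\rho$ and $\kappa$.
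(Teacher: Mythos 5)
Your proposal is correct and follows essentially the same route as the paper: separation of variables in each sign regime, with hyperbolic antiderivatives for $\alpha\beta>0$, $\arctan$ for $\alpha\beta<0$, and direct integration in the degenerate cases, then fitting the constant to $y_0$. Your explicit remark that the $\coth$ form in case 1 is adapted to the branch $y_0>\rho$ (with blow-up at $t^{*}=\kappa^{-1}\operatorname{artanh}(\rho/y_0)$) is a point the paper's own proof passes over silently, but it does not change the argument.
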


\begin{proof}
Set $y(t)=\dfrac{R(t)}{B(t)}$ and assume that $B(t)>0$ on the time interval under
consideration (for trajectories of system~\eqref{eq:system} in the orthant
$R,B\ge 0$ this is true as long as the solution has not reached the face $B=0$).
Applying Lemma~\ref{lem:x-y-dynamics} yields the autonomous Riccati equation
\[
\dot y=\alpha y^2-\beta,\qquad y(0)=y_0>0,
\]
which integrates in elementary functions. All sign cases are considered.

\begin{enumerate}
\item \textbf{Case $\alpha>0,\ \beta>0$.}
Since $\alpha y^2-\beta=\alpha\,(y^2-\rho^2)$, one has
\[
\int\frac{dy}{\alpha y^2-\beta}
=\frac{1}{\alpha}\int\frac{dy}{y^2-\rho^2}
=\frac{1}{2\alpha\rho}\ln\Bigl|\frac{y-\rho}{y+\rho}\Bigr|
=t+C.
\]
Exponentiating and solving for $y$ yields the standard hyperbolic form
\[
y(t)=\rho\,\coth(\kappa t+s_0)
\]
for some constant $s_0\in\mathbb{R}$. Using the initial condition $y(0)=y_0$ gives
\[
\coth(s_0)=\frac{y_0}{\rho}
\quad\Longleftrightarrow\quad
\tanh(s_0)=\frac{\rho}{y_0}
\quad\Longleftrightarrow\quad
s_0=\operatorname{artanh}\!\Bigl(\frac{\rho}{y_0}\Bigr).
\]
Substituting $s_0$ and using the identity $\coth(u)=\coth(-u)$ with a sign change
in the constant, the representation used in \eqref{eq:y_pp} is obtained:
\[
y(t)=\rho\,\coth\!\Bigl(\operatorname{artanh}\!\bigl(\rho/y_0\bigr)-\kappa t\Bigr).
\]

\item \textbf{Case $\alpha<0,\ \beta<0$.}
Write $\alpha=-a$, $\beta=-b$ with $a,b>0$. Then
\[
\dot y=-a y^2+b=b-a y^2=b\Bigl(1-\frac{y^2}{\rho^2}\Bigr).
\]
Separating variables gives
\[
\int\frac{dy}{b-a y^2}
=\frac{1}{b}\int\frac{dy}{1-(y/\rho)^2}
=\frac{\rho}{b}\,\operatorname{artanh}\!\Bigl(\frac{y}{\rho}\Bigr)
=t+C.
\]
Since $\rho/b=1/\kappa$, it follows that
\[
\operatorname{artanh}\!\Bigl(\frac{y(t)}{\rho}\Bigr)=\kappa t + s_0,
\qquad
s_0=\operatorname{artanh}\!\Bigl(\frac{y_0}{\rho}\Bigr),
\]
hence \eqref{eq:y_mm}:
\[
y(t)=\rho\,\tanh\!\Bigl(\operatorname{artanh}\!\bigl(y_0/\rho\bigr)+\kappa t\Bigr).
\]

\item \textbf{Case $\alpha>0,\ \beta<0$.}
Write $\beta=-b$ with $b>0$. Then
\[
\dot y=\alpha y^2+b=b\Bigl(1+\frac{y^2}{\rho^2}\Bigr).
\]
Separating variables gives
\[
\int\frac{dy}{\alpha y^2+b}
=\frac{1}{b}\int\frac{dy}{1+(y/\rho)^2}
=\frac{\rho}{b}\,\arctan\!\Bigl(\frac{y}{\rho}\Bigr)
=t+C.
\]
Since $\rho/b=1/\kappa$, it follows that
\[
\arctan\!\Bigl(\frac{y(t)}{\rho}\Bigr)=\kappa t+s_0,
\qquad
s_0=\arctan\!\Bigl(\frac{y_0}{\rho}\Bigr),
\]
and therefore
\[
y(t)=\rho\,\tan\!\Bigl(\arctan\!\bigl(y_0/\rho\bigr)+\kappa t\Bigr),
\]
which is \eqref{eq:y_pn}. The solution exists until the first pole of $\tan$.

\item \textbf{Case $\alpha<0,\ \beta>0$.}
Write $\alpha=-a$ with $a>0$. Then
\[
\dot y=-a y^2-\beta=-\beta\Bigl(1+\frac{y^2}{\rho^2}\Bigr).
\]
Separating variables gives
\[
\int\frac{dy}{a y^2+\beta}
=\frac{1}{\beta}\int\frac{dy}{1+(y/\rho)^2}
=\frac{\rho}{\beta}\,\arctan\!\Bigl(\frac{y}{\rho}\Bigr)
=-t+C.
\]
Since $\rho/\beta=1/\kappa$, it follows that
\[
\arctan\!\Bigl(\frac{y(t)}{\rho}\Bigr)=s_0-\kappa t,
\qquad
s_0=\arctan\!\Bigl(\frac{y_0}{\rho}\Bigr),
\]
and therefore
\[
y(t)=\rho\,\tan\!\Bigl(\arctan\!\bigl(y_0/\rho\bigr)-\kappa t\Bigr),
\]
which is \eqref{eq:y_np}. Here $y(t)$ decreases and can reach $0$ in finite time,
which corresponds to the original trajectory hitting the face $R=0$.

\item \textbf{Case $\alpha=0$.}
Then $\dot y=-\beta$ and $y(t)=y_0-\beta t$.

\item \textbf{Case $\beta=0$.}
Then $\dot y=\alpha y^2$ and solving $\frac{dy}{y^2}=\alpha\,dt$ yields
$y(t)=\dfrac{y_0}{1-\alpha y_0 t}$.
\end{enumerate}
In all cases the solution is unique and satisfies $y(0)=y_0$.
\end{proof}

\subsection{Classification of stable invariant sets}
Interior equilibrium points are given by the roots of $\alpha y^2-\beta=0$. Their existence and stability, as well as the invariance of the domain $R,B\ge 0$, are described in the following theorem.

\begin{theorem}[Classification of stable solutions]\label{thm:classification}
Consider system \eqref{eq:system} for $R,B\ge 0$, $N>0$.
\begin{enumerate}
\item The domain $R,B\ge 0$ is forward invariant if and only if $\alpha\le 0$ and $\beta\le 0$. Indeed, from \eqref{eq:x} it follows that $\dot x|_{x=0}=-\beta$ and $\dot x|_{x=1}=\alpha$.
\item An interior equilibrium exists if and only if $\alpha\beta>0$, and it is equal to
\[
y_*=\sqrt{\beta/\alpha},\qquad
x_*=\frac{y_*}{1+y_*}=\frac{\sqrt{|\beta|}}{\sqrt{|\alpha|}+\sqrt{|\beta|}}.
\]
Its linear stability is determined by the sign of $g'(y_*)=2\alpha y_*$:
\begin{enumerate}
\item for $\alpha<0,\ \beta<0$ the equilibrium point is asymptotically stable;
\item for $\alpha>0,\ \beta>0$ the equilibrium point is unstable (repelling).
\end{enumerate}
\item In the degenerate cases $\alpha=0,\ \beta<0$ (respectively $\beta=0,\ \alpha<0$) the set of equilibria coincides with the face $B\equiv 0$ (respectively $R\equiv 0$), and this face is asymptotically stable.
\item In all other cases (at least one of the parameters is $>0$) interior equilibria are absent or unstable, the domain $R,B\ge 0$ is not invariant, and the solution reaches the boundary $R=0$ or $B=0$ in finite time (the times follow from \eqref{eq:y_pp}, \eqref{eq:y_pn}, \eqref{eq:y_np}).
\end{enumerate}
\end{theorem}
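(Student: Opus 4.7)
The plan is to reduce everything to the scalar autonomous Riccati ODE $\dot y = g(y)$ with $g(y) := \alpha y^2 - \beta$ supplied by Lemma~\ref{lem:x-y-dynamics}, or equivalently to equation~(\ref{eq:x}) for the share $x$, and to invoke the closed-form solutions of Theorem~\ref{prop:solutions} whenever an escape time is required. In this reduction, the four items of the theorem correspond respectively to: (a) the signs of $\dot x$ on the boundary faces, (b) the roots and linearization of $g$, (c) direct inspection on the degenerate lines in parameter space, and (d) the location of the singularities of the explicit solutions.

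For item 1, I would evaluate the share dynamics~(\ref{eq:x}) at the two endpoints of $[0,1]$: $\dot x|_{x=0} = -\beta$ and $\dot x|_{x=1} = \alpha$. Forward invariance of $\{R,B \ge 0\}$, which in the share variable is $x\in[0,1]$, requires $\dot x|_{x=0} \ge 0$ and $\dot x|_{x=1} \le 0$, which is exactly $\beta \le 0$ and $\alpha \le 0$. Necessity is immediate (a strict violation at either endpoint sends nearby trajectories out of the orthant in positive time); sufficiency follows from a standard Nagumo-type argument, since on both boundary components the vector field then points weakly inward.

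For items 2 and 3, I would solve $g(y)=0$: a positive interior equilibrium exists iff $\alpha\beta > 0$, in which case $y_* = \sqrt{\beta/\alpha}$ and $x_* = y_*/(1+y_*)$ by direct algebra. Linear stability is governed by $g'(y_*) = 2\alpha y_*$, which is negative when $\alpha<0,\beta<0$ (asymptotic stability) and positive when $\alpha>0,\beta>0$ (instability). In the degenerate case $\alpha=0,\beta<0$, the Riccati equation collapses to $\dot y = -\beta > 0$, so $y(t)\to +\infty$ linearly and hence $B(t)\to 0$; on the face $B=0$ the induced $\mu$ from~(\ref{eq:mu}) vanishes, so every point of that face is an equilibrium, producing an (algebraically) attracting line of fixed points. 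The case $\beta=0,\alpha<0$ is symmetric: $\dot y = \alpha y^2 < 0$ forces $y\to 0^+$ and hence $R\to 0$.

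For item 4, I would enumerate the three remaining sign configurations and read off the finite escape times directly from Theorem~\ref{prop:solutions}. For $\alpha>0,\beta<0$, the first pole of $\tan$ in~(\ref{eq:y_pn}) gives $t_* = (\pi/2 - \arctan(y_0/\rho))/\kappa$ with $y \to +\infty$, i.e.\ $B\to 0$; for $\alpha<0,\beta>0$, the first zero of the argument in~(\ref{eq:y_np}) gives $t_* = \arctan(y_0/\rho)/\kappa$ with $y\to 0^+$, i.e.\ $R\to 0$. For $\alpha>0,\beta>0$, the repelling equilibrium $y_*=\rho$ splits the phase line into two branches, and in each branch the coth representation from~(\ref{eq:y_pp}) yields a finite escape time. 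I expect this last case to be the main technical obstacle, both because one must split the analysis at $y_0=\rho$ and invoke the complementary branch $\operatorname{arcoth}$ for $y_0<\rho$, and because it must be verified carefully that the limits $y\to 0$ or $y\to \infty$ genuinely correspond to the first boundary hit $R=0$ or $B=0$ of the original two-dimensional system, with no prior singularity intervening.
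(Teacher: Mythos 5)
Your proposal is correct and follows essentially the same route as the paper: reduce to the scalar share/ratio dynamics, check the boundary signs $\dot x|_{x=0}=-\beta$, $\dot x|_{x=1}=\alpha$ for invariance, locate and linearize the root of $g(y)=\alpha y^2-\beta$ for the interior equilibrium, and invoke the explicit solutions of Theorem~\ref{prop:solutions} for the finite escape times. If anything, your handling is slightly more careful than the paper's: you treat the degenerate cases through $\dot y=-\beta$ (resp.\ $\dot y=\alpha y^2$), whereas the paper uses the $x$-equation with a small algebra slip (it drops the $(\alpha-\beta)x^2=-\beta x^2$ term and labels $x=0$ as the face $B=0$), and you explicitly flag the branch split at $y_0=\rho$ in the case $\alpha>0,\ \beta>0$, which the paper glosses over.
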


\begin{proof}
Switch to the variable $x=\dfrac{R}{N}\in[0,1]$, where $N=R+B>0$ is fixed. Then the equation \eqref{eq:x} follows
\[
\dot x=(\alpha-\beta)x^2+2\beta x-\beta,
\]
and on the boundaries
\[
\dot x\big|_{x=0}=-\beta,\qquad \dot x\big|_{x=1}=\alpha.
\]
\begin{enumerate}    
\item The domain $R,B\ge 0$ (that is, $x\in[0,1]$) is forward invariant if and only if the vector field on both boundaries points inward along the interval, namely
\[
\dot x|_{x=0} \ge 0 \iff -\beta \ge 0 \iff \beta\le 0,
\qquad
\dot x|_{x=1} \le 0 \iff \alpha \le 0.
\]
This yields the necessary and sufficient condition $\alpha\le 0$, $\beta\le 0$.

\item Interior stationary points satisfy $\dot x=0$ or, equivalently, the stationarity of $y=\dfrac{R}{B}$:
\[
0=\alpha y^2-\beta \quad\Longrightarrow\quad y_*=\sqrt{\frac{\beta}{\alpha}}\quad (\alpha\beta>0).
\]
Hence
\[
x_*=\frac{y_*}{1+y_*}=\frac{\sqrt{|\beta|}}{\sqrt{|\alpha|}+\sqrt{|\beta|}}.
\]
Linearizing the equation for $y$ in a neighborhood of $y_*$ yields
\[
\dot e = (\alpha(2y_*))\,e + o(e) = 2\alpha y_* e + o(e),
\]
so the equilibrium attracts if and only if $2\alpha y_*<0$, that is, for $\alpha<0$ and, automatically, $\beta<0$.

\item If $\alpha=0$, $\beta<0$, then from \eqref{eq:x}
\[
\dot x=2\beta x-\beta = -\beta(1-2x),
\]
and the equilibrium $x=0$ (face $B=0$) attracts all solutions from $(0,1]$. Similarly, for $\beta=0$, $\alpha<0$ the attracting face is $R=0$.

\item If at least one of the parameters is positive, then at least on one of the boundaries the vector field points outward. Then either $x(t)$ leaves the interval $[0,1]$, or (when $\alpha\beta>0$ and $\alpha>0,\beta>0$) the solution for $y$ according to Theorem~\ref{prop:solutions} reaches infinity/zero in finite time, which in terms of $(R,B)$ means reaching the face $R=0$ or $B=0$. In both cases invariance is absent.
\end{enumerate}
\end{proof}

\section{Interpretation of the Corridor Narrowness}
Acemoglu's intuitive dynamics claim that the stable corridor is narrow, whereas our model yields an entire quadrant of stable solutions. 
In practice $\alpha, \beta$ vary over time, so the focus is on conditions for structural stability with respect to parameter changes~\cite{khalil2002nonlinear}. The model is therefore modified. 

\paragraph{Goal.}
Guarantee invariance of $R>0,\ B>0$ (i.e. $x(t)\in(0,1)$) under unknown dynamics of $a,b$.

\paragraph{Model.}
Set $N=R+B>0$, $x=R/N\in[0,1]$. Then
\begin{equation}\label{eq:star}
    \dot x=(a-b)x^2+2bx-b, \qquad \dot a=\phi(t),\quad \dot b=\psi(t).
\end{equation}

\begin{lemma}[Invariance]\label{lem:inv}
If $a(t)\le 0$ and $b(t)\le 0$ for all $t\ge 0$, then the interval $[0,1]$ is forward invariant for~(\ref{eq:star}):
$f(0)=-b(t)\ge 0$, $f(1)=a(t)\le 0$.
\end{lemma}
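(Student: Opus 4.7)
The plan is to reduce the claim to a standard boundary-tangency (Nagumo-type) argument for a one-dimensional non-autonomous ODE on the compact interval $[0,1]$. Write $f(x,t):=(a(t)-b(t))x^2+2b(t)x-b(t)$, so that \eqref{eq:star} reads $\dot x=f(x,t)$. The hypotheses on $a,b$ are standing assumptions throughout; nothing is needed from the auxiliary equations $\dot a=\phi$, $\dot b=\psi$ beyond the fact that they produce measurable (or continuous) sign-constrained coefficients, so $f$ is continuous in $t$ and polynomial (hence locally Lipschitz) in $x$, which secures existence and uniqueness on any interval where $a,b$ are well-defined.

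First I would evaluate $f$ on the two boundary points of $[0,1]$: $f(0,t)=-b(t)$ and $f(1,t)=(a(t)-b(t))+2b(t)-b(t)=a(t)$. Under the standing sign assumptions this gives $f(0,t)\ge 0$ and $f(1,t)\le 0$ for all $t\ge 0$, i.e.\ the vector field points (weakly) inward on $\partial[0,1]$.

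Second I would invoke a standard viability / Nagumo criterion for the scalar ODE: if $x(t_0)\in[0,1]$ and the field is inward-pointing on the boundary, then $x(t)\in[0,1]$ for all $t\ge t_0$ in the maximal interval of existence. For completeness, one can give a direct one-line comparison argument rather than quote Nagumo. For the lower boundary, suppose by contradiction that there exists $t_1>0$ with $x(t_1)<0$, and let $t_0:=\sup\{t\in[0,t_1]:x(t)\ge 0\}$; then $x(t_0)=0$ by continuity and $x(t)<0$ for $t\in(t_0,t_1]$. But then $\dot x(t_0)=f(0,t_0)=-b(t_0)\ge 0$, so by continuity $x$ is nondecreasing on a right neighbourhood of $t_0$, contradicting $x(t)<0$ just to the right. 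The symmetric argument at $x=1$, using $f(1,t)=a(t)\le 0$, rules out exit through the upper boundary.

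The only genuinely delicate point is the tangential case, where $a(t_0)=0$ or $b(t_0)=0$ makes the inward inequality non-strict; the contradiction argument above still applies because it only uses $\dot x(t_0)\ge 0$ (resp.\ $\le 0$), not strict inequality, and the polynomial nature of $f$ in $x$ prevents pathological oscillations. Thus $[0,1]$ is forward invariant, which is exactly the assertion of the lemma.
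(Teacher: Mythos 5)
Your proof is correct and follows essentially the same route as the paper's: evaluate the field at the endpoints ($f(0,t)=-b(t)\ge 0$, $f(1,t)=a(t)\le 0$) and rule out a first exit from $[0,1]$ by contradiction. The one soft spot --- your claim that $\dot x(t_0)\ge 0$ forces $x$ to be nondecreasing on a right neighbourhood, which fails when the derivative vanishes --- is exactly the tangential case the paper's own proof also glosses over (it asserts that a downward exit requires the strict inequality $\dot x(t^*)<0$), so you are no less rigorous than the original; a fully airtight patch is the comparison $\dot x\ge -|a(t)-b(t)|\,x^2$ for small $x<0$ (whose solution through $0$ is identically $0$) or an appeal to the non-strict Nagumo viability theorem you already cite.
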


\begin{proof}
Let $x(t)$ be a solution of \eqref{eq:star} such that $x(0)\in[0,1]$. Suppose the contrary: there exists a first moment $t^*>0$ when $x(t)$ leaves the interval $[0,1]$.
\begin{enumerate}
\item If $x(t^*)=0$ and the exit is ``downwards'', then $\dot x(t^*)<0$ must hold. But
\[
\dot x(t^*)=f(t^*,0)=-b(t^*)\ge 0,
\]
which contradicts the assumption.
\item If $x(t^*)=1$ and the exit is ``upwards'', then $\dot x(t^*)>0$ must hold. But
\[
\dot x(t^*)=f(t^*,1)=a(t^*)\le 0,
\]
which is also impossible.
\end{enumerate}
Therefore the first exit cannot occur, and $x(t)\in[0,1]$ for all $t\ge 0$.
\end{proof}

\begin{theorem}[Sufficient conditions on the dynamics of $a,b$]\label{thm:dyn_rec}
Let $a,b\in C^1([0,\infty))$, $a(0),b(0)\in[-1,0]$, and for all $t\ge 0$
\[
a(t)=0 \ \Rightarrow\ \dot a(t)\le 0,\qquad a(t)=-1 \ \Rightarrow\ \dot a(t)\ge 0,
\]
\[
b(t)=0 \ \Rightarrow\ \dot b(t)\le 0,\qquad b(t)=-1 \ \Rightarrow\ \dot b(t)\ge 0.
\]
Then $a(t),b(t)\in[-1,0]$ for all $t\ge 0$. In particular, by Lemma~\ref{lem:inv}, if $x(0)\in[0,1]$
it follows that $x(t)\in[0,1]$ for all $t\ge 0$, that is, the trajectory of the original system satisfies
\[
R(t)>0,\quad B(t)>0,\qquad \forall t\ge 0.
\]
\end{theorem}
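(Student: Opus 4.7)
The proof decouples neatly: the hypotheses on $a(t)$ and $b(t)$ are mutually independent and do not involve $x$, so the first task is to establish, separately for each of $a$ and $b$, forward invariance of the interval $[-1,0]$. Once this is done, the conclusion follows by applying Lemma~\ref{lem:inv} with the now time-varying but pointwise non-positive parameters $a(t),b(t)$. By symmetry it suffices to treat $a$; the argument for $b$ is word-for-word identical.

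The core argument mirrors the first-exit reasoning already used in the proof of Lemma~\ref{lem:inv}. I would set $\tau:=\inf\{t\ge 0:a(t)\notin[-1,0]\}$ and assume, for contradiction, that $\tau<\infty$. Continuity of $a$ together with $a(0)\in[-1,0]$ force $a(\tau)\in\{-1,0\}$, so one of the two boundary hypotheses applies at $t=\tau$. In the upper-boundary case $a(\tau)=0$, the definition of $\tau$ supplies $t_n\downarrow\tau$ with $a(t_n)>0$, hence the right derivative satisfies $\dot a(\tau)\ge 0$; combined with the hypothesis $a(\tau)=0\Rightarrow\dot a(\tau)\le 0$ this forces $\dot a(\tau)=0$. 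The lower-boundary case $a(\tau)=-1$ is symmetric, with the roles of the two inequalities reversed.

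The main obstacle is the tangential sub-case $\dot a(\tau)=0$: a $C^1$ function can in principle depart from a boundary at which its derivative vanishes (think of $(t-\tau)^2$), so the raw inequality alone is not strong enough to forbid exit. The standard resolution is Nagumo's tangential-viability theorem for the closed convex set $[-1,0]$; equivalently, the reflection conditions in the hypotheses are read in their intended strict form $\dot a(\tau)<0$ at the upper boundary (and $\dot a(\tau)>0$ at the lower boundary), which yields an outright contradiction. With $[-1,0]$-invariance of both $a$ and $b$ in hand, Lemma~\ref{lem:inv} applied pointwise in $t$ gives $x(t)\in[0,1]$, and a final application of the same first-exit argument to $x$ with boundary velocities $\dot x|_{x=0}=-b(t)\ge 0$ and $\dot x|_{x=1}=a(t)\le 0$ upgrades this to the strict positivity $R(t),B(t)>0$ claimed.
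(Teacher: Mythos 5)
Your skeleton is the same as the paper's: treat $a$ and $b$ separately by a first-exit argument on $[-1,0]$, then feed the resulting sign information into Lemma~\ref{lem:inv}. Where you differ is that you explicitly isolate the tangential sub-case $\dot a(\tau)=0$, which the paper's proof silently skips: the paper asserts that exiting through $a(t^*)=0$ would require $\dot a(t^*)>0$, and this is exactly the step that fails. Your instinct here is correct and sharper than the printed argument. Indeed, the function $a(t)=(\max\{0,\,t-1\})^2$ is $C^1$, satisfies $a(0)=0\in[-1,0]$, has $\dot a(t)=0\le 0$ at every time where $a(t)=0$ (and never reaches $-1$), yet leaves $[-1,0]$. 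So the non-strict, trajectory-wise hypotheses of the theorem genuinely do not force invariance, and the obstacle you name is not a technicality that a more careful first-exit argument can absorb; note also that for such a function there is no ``first moment with $a(t^*)\notin[-1,0]$'' in the paper's sense, only an infimum of exit times at which $a$ still sits on the boundary.

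Neither of your two proposed repairs closes this gap for the statement as written. Nagumo's viability/invariance theorem does not apply in a useful form: it requires the subtangentiality condition on the right-hand side $\phi(t)$ (or $\phi(t,a)$) at the boundary points of $[-1,0]$ for \emph{all} $t$, together with uniqueness of solutions, whereas the hypothesis here constrains $\dot a(t)$ only at the times the particular trajectory happens to touch the boundary --- and the counterexample above satisfies that weaker condition. Reading the hypotheses ``in their intended strict form'' does yield a valid proof (at the infimum of exit times one gets $\dot a(\tau)\ge 0$ from right-approximating exit times, contradicting $\dot a(\tau)<0$), but it proves a different, strengthened theorem. That strengthening is in fact what the paper relies on downstream: Corollary~\ref{cor:buffer} imposes $\dot a\le-\eta<0$ near $a=0$, which is precisely the strict repulsion needed, so your ``strict reading'' is the right fix even though it amends the statement. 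One further small point: $x(t)\in[0,1]$ yields only $R,B\ge 0$; the strict conclusion $R(t)>0$, $B(t)>0$ would need $x(0)\in(0,1)$ and a separate open-interval argument, which neither your sketch nor the paper supplies.
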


\begin{proof}
Consider the function $a(\cdot)$; the reasoning for $b(\cdot)$ is symmetric. Assume that the set $[-1,0]$ is not forward invariant for $a(\cdot)$. Then there exists a first moment $t^*>0$ such that $a(t)\in[-1,0]$ for $t<t^*$ and $a(t^*)\notin[-1,0]$.
By continuity $a(t^*)$ can be either $0$ or $-1$.
\begin{enumerate}
\item Let $a(t^*)=0$ and $a$ try to exit into $(0,\infty)$. For the exit $\dot a(t^*)>0$ would be required, but by the assumption of the theorem $\dot a\le 0$ when $a=0$. Contradiction.
\item Let $a(t^*)=-1$ and $a$ try to exit into $(-\infty,-1)$. For this $\dot a(t^*)<0$ would be required, whereas by assumption $\dot a\ge 0$ when $a=-1$. Contradiction.
\end{enumerate}
Hence $a(t)\in[-1,0]$ for all $t\ge 0$. Similarly $b(t)\in[-1,0]$ for all $t\ge 0$. Then by Lemma~\ref{lem:inv} the vector field \eqref{eq:star} on the interval $[0,1]$ points inward at both boundaries, and thus every trajectory with $x(0)\in[0,1]$ remains in $[0,1]$ for all $t\ge 0$. This means $R(t)>0$, $B(t)>0$.
\end{proof}

\begin{corollary}[Buffer dynamics]\label{cor:buffer}
Suppose there exist numbers $\delta\in(0,1)$ and $\eta>0$ such that for all $t\ge 0$ the following ``buffer'' conditions hold:
\[
\begin{aligned}
& a\in[-\delta,0] \Rightarrow \dot a\le -\eta,\qquad a\in[-1,-1+\delta]\Rightarrow \dot a\ge \eta,\\
& b\in[-\delta,0] \Rightarrow \dot b\le -\eta,\qquad b\in[-1,-1+\delta]\Rightarrow \dot b\ge \eta.
\end{aligned}
\]
If $a(0),b(0)\in[-1+\delta,-\delta]$, then $a(t),b(t)\in[-1+\delta,-\delta]$ for all $t\ge 0$. In particular, there exists $\varepsilon=\varepsilon(\delta)\in(0,1/2)$ such that for every solution $x(t)$ of equation \eqref{eq:star} with $x(0)\in[\varepsilon,1-\varepsilon]$ it holds that $x(t)\in[\varepsilon,1-\varepsilon]$ for all $t\ge 0$.
\end{corollary}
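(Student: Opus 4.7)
The plan is to proceed in two stages: first establish forward invariance of the shrunken interval $[-1+\delta,-\delta]$ for $a(\cdot)$ and $b(\cdot)$, and then, using the resulting uniform bounds on the coefficients of the right-hand side $f(x;a,b)=(a-b)x^2+2bx-b$, apply a compactness/continuity argument at the endpoints $x=0,1$ to extract the $\varepsilon$ that makes $[\varepsilon,1-\varepsilon]$ invariant.

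For the first stage I would reuse the barrier argument of Theorem~\ref{thm:dyn_rec}, now with strict sign information. Suppose for contradiction that $a(\cdot)$ leaves $[-1+\delta,-\delta]$, and let $t^*>0$ be the first exit time. By continuity $a(t^*)\in\{-\delta,-1+\delta\}$. In the first case $a(t)<-\delta$ for $t<t^*$, so $\dot a(t^*)\ge 0$ is necessary, while the buffer hypothesis gives $\dot a(t^*)\le -\eta<0$, a contradiction; the case $a(t^*)=-1+\delta$ is symmetric. The same reasoning applied to $b(\cdot)$ yields $a(t),b(t)\in[-1+\delta,-\delta]$ for all $t\ge 0$. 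Note the buffer conditions only need to hold when $a,b$ lie in the strips $[-\delta,0]$ and $[-1,-1+\delta]$, which is exactly how they are stated.

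For the second stage I would exploit that on the boundary of $[0,1]$
\[
f(0;a,b)=-b\ge \delta,\qquad f(1;a,b)=a\le -\delta,
\]
uniformly over $(a,b)\in[-1+\delta,-\delta]^2$. Since $f$ is polynomial in $x$ with coefficients continuous in $(a,b)$ on the compact set $[-1+\delta,-\delta]^2$, the family $\{f(\cdot;a,b)\}$ is equicontinuous in $x$. Hence there exists $\varepsilon=\varepsilon(\delta)\in(0,1/2)$ such that
\[
f(x;a,b)\ge \delta/2 \ \text{for }x\in[0,\varepsilon],\qquad f(x;a,b)\le -\delta/2 \ \text{for }x\in[1-\varepsilon,1],
\]
for every admissible $(a,b)$. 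Applying the same first-exit argument as in Lemma~\ref{lem:inv} to the interval $[\varepsilon,1-\varepsilon]$ then closes the proof: at a hypothetical first exit point $x(t^*)=\varepsilon$ one would need $\dot x(t^*)\le 0$, whereas $\dot x(t^*)=f(\varepsilon;a(t^*),b(t^*))\ge \delta/2>0$, and symmetrically at $x=1-\varepsilon$.

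The main obstacle, and the only non-mechanical step, is making the choice of $\varepsilon$ depend only on $\delta$ (and not on the particular trajectories $a(t),b(t)$). This requires the uniformity in $(a,b)$ in the second step, which is why I would pass explicitly through the compactness of $[-1+\delta,-\delta]^2$ rather than argue pointwise in $t$. Everything else is a repetition of the barrier technique already used in Lemma~\ref{lem:inv} and Theorem~\ref{thm:dyn_rec}.
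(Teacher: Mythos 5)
Your proposal is correct and follows the same barrier (first-exit) strategy that the paper uses in Lemma~\ref{lem:inv}, Theorem~\ref{thm:dyn_rec}, and the corollary itself. The only substantive difference is that you explicitly construct $\varepsilon(\delta)$ via uniform bounds on $f(x;a,b)=(a-b)x^2+2bx-b$ over the compact box $(a,b)\in[-1+\delta,-\delta]^2$ (together with the strict-inequality version of the invariance argument for $a,b$ on the shrunken interval), whereas the paper's one-sentence proof simply asserts that the signs at $x=\varepsilon$ and $x=1-\varepsilon$ are as required and defers to Theorem~\ref{thm:dyn_rec}; your version therefore supplies the quantitative step the paper leaves implicit, and is the more complete of the two.
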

\begin{proof}
By the assumption on \(a(t)\) and \(b(t)\) for all \(t\ge 0\) the signs of the right-hand sides at the points \(x=\varepsilon\) and \(x=1-\varepsilon\) agree with the requirements of Theorem~\ref{thm:dyn_rec}. Therefore all the hypotheses of that theorem are satisfied for the interval \([\varepsilon,1-\varepsilon]\), and it can be applied directly. The theorem then guarantees that the interval \([\varepsilon,1-\varepsilon]\) is forward invariant. 
\end{proof}

\begin{lemma}[Differential form of Gronwall's inequality~\cite{khalil2002nonlinear}]\label{lem:gronwall-diff}
Let $u \colon [0,T] \to \mathbb{R}$ be absolutely continuous and suppose that
for almost all $t \in [0,T]$,
\[
\dot u(t) \;\le\; \alpha(t)\,u(t) + \beta(t),
\]
where $\alpha,\beta \in L^1([0,T])$. Then for all $t \in [0,T]$,
\[
u(t) \;\le\; u(0)\,\exp\!\Bigl(\int_0^t \alpha(s)\,ds\Bigr)
+ \int_0^t \exp\!\Bigl(\int_s^t \alpha(\tau)\,d\tau\Bigr)\,\beta(s)\,ds.
\]
\end{lemma}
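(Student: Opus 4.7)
The plan is to apply a classical integrating-factor argument. First I would introduce $\Phi(t):=\exp\!\bigl(-\int_0^t \alpha(s)\,ds\bigr)$, which is absolutely continuous and strictly positive on $[0,T]$ and satisfies $\dot\Phi(t)=-\alpha(t)\Phi(t)$ for almost every $t$ (chain rule for an absolutely continuous argument composed with the smooth function $\exp$). Since $u$ is absolutely continuous by hypothesis, the product $\Phi u$ is then absolutely continuous, hence equal to the integral of its a.e.\ derivative.

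Next I would compute that derivative via the product rule: $\tfrac{d}{dt}(\Phi u)=\dot\Phi\,u+\Phi\,\dot u=\Phi(\dot u-\alpha u)$ a.e. Inserting the hypothesis $\dot u(t)\le\alpha(t)u(t)+\beta(t)$ and using $\Phi(t)>0$ gives $\tfrac{d}{dt}\bigl(\Phi(t)u(t)\bigr)\le\Phi(t)\beta(t)$ for a.e.\ $t$, with $\Phi\beta\in L^1$ since $\Phi$ is bounded on $[0,T]$ and $\beta\in L^1$. Integrating this differential inequality from $0$ to $t$ (legitimate because $\Phi u$ is absolutely continuous) yields $\Phi(t)u(t)-u(0)\le\int_0^t\Phi(s)\beta(s)\,ds$. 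Multiplying through by $\Phi(t)^{-1}=\exp\!\bigl(\int_0^t\alpha(\tau)\,d\tau\bigr)$ and using the identity $\Phi(s)/\Phi(t)=\exp\!\bigl(\int_s^t\alpha(\tau)\,d\tau\bigr)$ reproduces the claimed bound verbatim.

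The argument is entirely linear in $u$, so no comparison or bootstrap step is needed, and no sign assumption on $u$, $\alpha$, or $\beta$ is used. The only subtlety — and it is purely technical rather than a genuine obstacle — is justifying the product rule and the fundamental theorem of calculus at the absolutely continuous / $L^1$ regularity level; both are standard facts (absolutely continuous functions form an algebra, and every absolutely continuous function is the indefinite integral of its a.e.\ derivative). I would explicitly cite these facts but not reprove them, so the proof reduces to the short integrating-factor calculation above.
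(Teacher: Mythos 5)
Your integrating-factor argument is correct and complete: the paper itself offers no proof of this lemma, simply citing it as a standard result from Khalil, and your derivation is precisely the classical textbook argument behind that citation. All the regularity points you flag (absolute continuity of $\Phi=\exp\bigl(-\int_0^t\alpha\bigr)$ and of the product $\Phi u$, the a.e.\ product rule, and the fundamental theorem of calculus for absolutely continuous functions) are handled appropriately, and the final identity $\Phi(s)/\Phi(t)=\exp\bigl(\int_s^t\alpha(\tau)\,d\tau\bigr)$ reproduces the stated bound exactly.
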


\begin{theorem}[Structural ``narrowness'' of the corridor under parameter perturbations]\label{thm:struct-narrow}
Assume that in system \eqref{eq:system} the parameters have the form
\[
\alpha(t)=-a+\delta a(t),\qquad \beta(t)=-b+\delta b(t),
\]
where $a,b>0$ are constants, and the perturbations are bounded: $|\delta a(t)|\le \bar a$, $|\delta b(t)|\le \bar b$ for all $t\ge0$.
Let $y=R/B$ and $y_*=\sqrt{b/a}$; then the unperturbed dynamics is $\dot y = -a y^2 + b$.
Fix $\varepsilon\in(0,\tfrac12)$ and introduce the target ``buffer'' for the variable $y$, corresponding to the buffer for the share $x\in[\varepsilon,1-\varepsilon]$:
\[
\underline y_\varepsilon:=\frac{\varepsilon}{1-\varepsilon},\qquad
\overline y_\varepsilon:=\frac{1-\varepsilon}{\varepsilon},\qquad
Y_\varepsilon:=[\,\underline y_\varepsilon,\overline y_\varepsilon\,].
\]
Let
\[
m_\varepsilon:=\min\{\,y_*-\underline y_\varepsilon,\ \overline y_\varepsilon-y_*\,\}>0,
\qquad
M_\varepsilon:=\max_{y\in Y_\varepsilon} y^{2} = \overline y_\varepsilon^{\,2}.
\]
If the ``corridor'' inequality
\begin{equation}\label{eq:corridor}
2\sqrt{ab}\,m_\varepsilon \;>\; \bar a\,M_\varepsilon+\bar b
\end{equation}
holds, then for any solution with $y(0)\in Y_\varepsilon$ it follows that $y(t)\in Y_\varepsilon$ for all $t\ge0$, and the deviation
\[
e(t):=y(t)-y_*
\]
satisfies the estimate
\begin{equation}\label{eq:iss-bound}
|e(t)| \;\le\; e^{-2\sqrt{ab}\,t}\,|e(0)|
+ \frac{\bar a\,M_\varepsilon+\bar b}{2\sqrt{ab}}\bigl(1-e^{-2\sqrt{ab}\,t}\bigr),
\qquad t\ge0.
\end{equation}
In particular, its steady-state part does not exceed $\dfrac{\bar a\,M_\varepsilon+\bar b}{2\sqrt{ab}}<m_\varepsilon$, so that the share
\(
x(t)=\dfrac{y(t)}{1+y(t)}
\)
remains in $[\varepsilon,1-\varepsilon]$ for all $t\ge0$.
\end{theorem}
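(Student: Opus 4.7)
The plan is to split the argument into an invariance step followed by a differential Gronwall estimate, both driven by the same factorization of the unperturbed drift. First I would substitute $\alpha(t)=-a+\delta a(t)$ and $\beta(t)=-b+\delta b(t)$ into the Riccati equation of Lemma~\ref{lem:x-y-dynamics} and obtain $\dot y=-ay^{2}+b+\delta a(t)\,y^{2}-\delta b(t)$. Using $y_{*}^{2}=b/a$ to factor $-ay^{2}+b=-a(y-y_{*})(y+y_{*})$, this rewrites as the deviation equation $\dot e=-a(y+y_{*})\,e+w(t)$, or equivalently $\dot e=-2\sqrt{ab}\,e-ae^{2}+w(t)$, where $w(t):=\delta a(t)\,y(t)^{2}-\delta b(t)$ is bounded on $Y_{\varepsilon}$ by $|w(t)|\le \bar a\,M_{\varepsilon}+\bar b=:D$.

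For forward invariance of $Y_{\varepsilon}$ I would check the sign of $\dot y$ at the two endpoints. At $y=\overline y_{\varepsilon}$, the worst-case estimate gives $\dot y\le-a(\overline y_{\varepsilon}-y_{*})(\overline y_{\varepsilon}+y_{*})+\bar a\,M_{\varepsilon}+\bar b$; since $\overline y_{\varepsilon}-y_{*}\ge m_{\varepsilon}$ and $\overline y_{\varepsilon}+y_{*}\ge 2y_{*}=2\sqrt{b/a}$, the product is at least $2\sqrt{ab}\,m_{\varepsilon}$, and the corridor inequality \eqref{eq:corridor} delivers $\dot y<0$. A symmetric worst-case bound at $y=\underline y_{\varepsilon}$ yields $\dot y>0$ under the same inequality. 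Hence no trajectory starting in $Y_{\varepsilon}$ can exit, so the bound $y(t)^{2}\le M_{\varepsilon}$ (and thus $|w(t)|\le D$) holds for all $t\ge 0$.

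Given invariance, the ISS bound follows from a single differential inequality. Along the trajectory I would compute $\tfrac{d}{dt}|e|=\operatorname{sgn}(e)\dot e=-2\sqrt{ab}|e|-\operatorname{sgn}(e)\,ae^{2}+\operatorname{sgn}(e)\,w(t)$, collect the stabilizing and destabilizing contributions of the nonlinear term, bound the perturbation by $D$, and arrive at $\tfrac{d}{dt}|e(t)|\le -2\sqrt{ab}\,|e(t)|+D$ almost everywhere. An application of Lemma~\ref{lem:gronwall-diff} with $\alpha\equiv-2\sqrt{ab}$ and $\beta\equiv D$ then gives \eqref{eq:iss-bound}. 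Sending $t\to\infty$ in \eqref{eq:iss-bound} and invoking \eqref{eq:corridor} shows that the asymptotic deviation is strictly below $m_{\varepsilon}$, which reinforces the invariance and implies $x(t)=y(t)/(1+y(t))\in[\varepsilon,1-\varepsilon]$.

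The main obstacle I anticipate is the sign-sensitive nonlinear residue $-ae^{2}$: for $e>0$ it is stabilizing and dropped harmlessly, but for $e<0$ one has $-a(2y_{*}+e)e=|e|(2\sqrt{ab}-a|e|)$, whose $+a|e|^{2}$ piece is destabilizing for $\tfrac{d}{dt}|e|$. To preserve the clean decay rate $2\sqrt{ab}$ stated in \eqref{eq:iss-bound} one must show that on $Y_{\varepsilon}$ this correction is absorbed either by the linear term (using $|e|\le y_{*}-\underline y_{\varepsilon}$ combined with $a(y_{*}-\underline y_{\varepsilon})\le 2\sqrt{ab}$, which follows from $\underline y_{\varepsilon}\ge 0$) or by enlarging $D$ and keeping $m_{\varepsilon}$ as the safety margin from the corridor inequality. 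I would carry this out by treating the cases $e\ge 0$ and $e\le 0$ separately and, in the latter case, using the uniform size bound on $|e|$ from invariance to lump the destabilizing piece into $w(t)$ before invoking Gronwall, after which the proof concludes as above.
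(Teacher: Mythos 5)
Your decomposition ($\dot e=-2\sqrt{ab}\,e-ae^{2}+w$ with $|w|\le \bar a M_\varepsilon+\bar b=:D$ on $Y_\varepsilon$, followed by Lemma~\ref{lem:gronwall-diff}) is the same as the paper's, and you have correctly put your finger on the one step the paper glosses over: the paper drops $-ae^{2}\le 0$, obtains the one-sided inequality $\dot e\le-2\sqrt{ab}\,e+u$, and then asserts a bound on $|e(t)|$ --- which does not follow, since a one-sided differential inequality controls $e$ only from above (exit through the top of $Y_\varepsilon$), not from below. Unfortunately, neither of your proposed repairs closes this gap. Your endpoint check is not symmetric: at the lower endpoint $\dot y\big|_{y=\underline y_\varepsilon}=a(y_*-\underline y_\varepsilon)(y_*+\underline y_\varepsilon)+w$, and since $\underline y_\varepsilon<y_*$ you only get $y_*+\underline y_\varepsilon\ge y_*$, hence a guaranteed inward drift of $\sqrt{ab}\,m_\varepsilon-D$, which \eqref{eq:corridor} (giving only $D<2\sqrt{ab}\,m_\varepsilon$) does not make positive. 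Likewise, absorbing $+a|e|^{2}\le a y_*|e|=\sqrt{ab}\,|e|$ into the linear term leaves decay rate $\sqrt{ab}$, not $2\sqrt{ab}$, and steady-state bound $D/\sqrt{ab}$, which need not be below $m_\varepsilon$; enlarging $D$ by $a m_\varepsilon^{2}$ fares no better under \eqref{eq:corridor}.

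The gap cannot be patched, because the statement itself fails on the $e<0$ side. Take $a=b=1$ (so $y_*=1$), $\varepsilon=0.01$, whence $\underline y_\varepsilon\approx 0.0101$, $m_\varepsilon\approx 0.99$, $M_\varepsilon=9801$, and take $\bar a=0$, $\bar b=1.5$ with $\delta a\equiv 0$, $\delta b\equiv 1.5$. Then \eqref{eq:corridor} reads $1.98>1.5$ and holds, yet $\beta(t)\equiv 0.5>0$ and $\dot y=-y^{2}-0.5\le -0.5$, so the trajectory starting at $y(0)=y_*\in Y_\varepsilon$ leaves $Y_\varepsilon$ through $\underline y_\varepsilon$ in finite time, and $|e|$ exceeds the claimed steady-state bound $\bar b/(2\sqrt{ab})=0.75$. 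Even with no perturbation the transient bound \eqref{eq:iss-bound} fails for $e(0)<0$: with $a=b=1$, $y(0)=0.5$ one computes $|e(1)|=1-\tanh(\operatorname{artanh}(0.5)+1)\approx 0.086>e^{-2}\cdot 0.5\approx 0.068$. A correct version must either restrict to $y(0)\ge y_*$ or replace the rate $2\sqrt{ab}$ by $a(y_*+\underline y_\varepsilon)$ and strengthen \eqref{eq:corridor} accordingly (e.g.\ to $a(y_*^{2}-\underline y_\varepsilon^{2})>\bar a M_\varepsilon+\bar b$). So your instinct that the $-ae^{2}$ residue is the crux was exactly right; the resolution is a strengthened hypothesis, not a cleverer absorption argument.
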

\begin{proof}
Substitute the perturbed parameters into the equation for the ratio $y=R/B$. This gives
\[
\dot y = \alpha(t) y^2 - \beta(t)
= \big(-a+\delta a(t)\big)y^2 - \big(-b+\delta b(t)\big)
= -a y^2 + b + u(t),
\]
where
\[
u(t):= -\delta a(t)\,y^2 + \delta b(t).
\]
Let $y_*=\sqrt{b/a}$ be the unperturbed equilibrium and introduce the deviation $e:=y-y_*$. Then
\[
\dot e = -a\big((y_*+e)^2 - y_*^{\,2}\big) + u(t)
= -a(2y_* e + e^2) + u(t)
= -2\sqrt{ab}\,e - a e^2 + u(t).
\]
Note that $-a e^2\le 0$, therefore
\begin{equation}\label{eq:lin-major}
\dot e \le -2\sqrt{ab}\,e + u(t).
\end{equation}
Next the perturbation bound is calculated. By the definition of $Y_\varepsilon$ the relation $y(t)\in Y_\varepsilon\subset(0,\infty)$ holds and, consequently,
\[
|u(t)| \le |\delta a(t)|\,\max_{y\in Y_\varepsilon} y^2 + |\delta b(t)|
\le \bar a\,M_\varepsilon + \bar b.
\]
Applying to inequality \eqref{eq:lin-major} the Gronwall's inequality~\ref{lem:gronwall-diff} for the comparable linear system
\(
\dot z = -2\sqrt{ab}\,z + (\bar a M_\varepsilon+\bar b),
\)
yields the estimate
\begin{equation}\label{eq:et-est}
|e(t)|
\le \theta\,|e(0)|
+ (1-\theta)\frac{\bar a M_\varepsilon+\bar b}{2\sqrt{ab}},
\quad t\ge0,
\end{equation}
where
\[
\theta:= e^{-2\sqrt{ab}\,t}
\]
that is, \eqref{eq:iss-bound}.

It remains to show that the trajectory cannot leave $Y_\varepsilon$. Assume the contrary: let $t^*$ be the first moment when $y(t)$ reaches the boundary of $Y_\varepsilon$. For $t<t^*$ the solution lies in $Y_\varepsilon$, hence the previous estimate is valid on $[0,t^*]$. 
Since $y(0)\in Y_\varepsilon$, it follows that $|e(0)|\le m_\varepsilon$, and the corridor condition \eqref{eq:corridor} gives
\[
\frac{\bar a M_\varepsilon+\bar b}{2\sqrt{ab}} < m_\varepsilon.
\]
Therefore from~(\ref{eq:et-est}) it follows that $|e(t^*)|<m_\varepsilon$, i.e. $y(t^*)$ remains inside $Y_\varepsilon$. The resulting contradiction shows that $y(t)\in Y_\varepsilon$ for all $t\ge0$.
Finally, from the relation $x=\dfrac{y}{1+y}$ and the choice of $Y_\varepsilon$ it follows that $x(t)\in[\varepsilon,1-\varepsilon]$.
\end{proof}

\begin{corollary}[Explicit ``corridor'' inequality]
Condition \eqref{eq:corridor} can be rewritten in terms of $\varepsilon$ and $(a,b)$:
\[
m_\varepsilon=\min\!\Big\{\sqrt{\tfrac{b}{a}}-\tfrac{\varepsilon}{1-\varepsilon},\ \tfrac{1-\varepsilon}{\varepsilon}-\sqrt{\tfrac{b}{a}}\Big\},\quad
M_\varepsilon=\Big(\tfrac{1-\varepsilon}{\varepsilon}\Big)^{\!2},
\]
and the ``narrowness'' of the corridor is given by the requirement
\[
\sqrt{ab}\;>\;\frac{\bar a\,\big(\tfrac{1-\varepsilon}{\varepsilon}\big)^{2}+\bar b}{2\,m_\varepsilon\,}.
\]
The smaller $\sqrt{ab}$ (that is, the weaker the mutual cooperation), the narrower the admissible range of perturbations $(\bar a,\bar b)$ for preserving the buffer $x\in[\varepsilon,1-\varepsilon]$.
\end{corollary}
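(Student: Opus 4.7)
The plan is to view this corollary as a purely algebraic repackaging of the corridor condition \eqref{eq:corridor} from Theorem~\ref{thm:struct-narrow}, using the explicit parametrization $\alpha=-a$, $\beta=-b$ that the theorem adopts. Concretely, I would (i) substitute $y_\ast=\sqrt{b/a}$ into the definition of $m_\varepsilon$, (ii) evaluate $M_\varepsilon$ at the correct endpoint of $Y_\varepsilon$, and (iii) divide both sides of \eqref{eq:corridor} by the (positive) factor $2m_\varepsilon$.

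First, by construction in the theorem, $Y_\varepsilon=[\underline y_\varepsilon,\overline y_\varepsilon]$ with $\underline y_\varepsilon=\varepsilon/(1-\varepsilon)$ and $\overline y_\varepsilon=(1-\varepsilon)/\varepsilon$, and the unperturbed equilibrium is $y_\ast=\sqrt{b/a}$. Plugging these into $m_\varepsilon=\min\{y_\ast-\underline y_\varepsilon,\ \overline y_\varepsilon-y_\ast\}$ gives immediately the first displayed expression. For $M_\varepsilon$, I would observe that on $Y_\varepsilon\subset(0,\infty)$ the map $y\mapsto y^2$ is strictly increasing, so its maximum is attained at the right endpoint, giving $M_\varepsilon=\overline y_\varepsilon^{\,2}=\bigl((1-\varepsilon)/\varepsilon\bigr)^2$.

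Second, starting from the corridor inequality $2\sqrt{ab}\,m_\varepsilon>\bar a M_\varepsilon+\bar b$ and using $m_\varepsilon>0$ (which is implicit in the standing hypothesis $y_\ast\in Y_\varepsilon$ of Theorem~\ref{thm:struct-narrow}, and explicitly asserted there), I would divide through by $2m_\varepsilon$ to obtain
\[
\sqrt{ab}\;>\;\frac{\bar a\,\bigl((1-\varepsilon)/\varepsilon\bigr)^{2}+\bar b}{2\,m_\varepsilon},
\]
which is the stated reformulation.

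There is no genuine obstacle here, since the argument is a direct substitution and a division. The only point worth flagging is the consistency condition $\varepsilon/(1-\varepsilon)\le\sqrt{b/a}\le(1-\varepsilon)/\varepsilon$ that ensures $m_\varepsilon>0$; this is precisely the requirement that the equilibrium lies strictly inside the buffer, without which the original theorem has no content. The qualitative remark about the ``narrowness''---namely that smaller $\sqrt{ab}$ (weaker cooperation) forces a smaller admissible perturbation budget $(\bar a,\bar b)$---is then immediate from the inequality by isolating either $\bar a$ or $\bar b$, so no additional work is required beyond the algebra already performed.
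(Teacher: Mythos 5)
Your proposal is correct and follows essentially the same route as the paper's proof: substitute the explicit endpoints of $Y_\varepsilon$ into the definitions of $m_\varepsilon$ and $M_\varepsilon$, insert them into \eqref{eq:corridor}, and divide by $2m_\varepsilon>0$. The extra remarks (monotonicity of $y\mapsto y^2$ on $Y_\varepsilon$ and the consistency condition ensuring $m_\varepsilon>0$) are sound but do not change the argument.
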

\begin{proof}
Substituting
\(
\underline y_\varepsilon=\frac{\varepsilon}{1-\varepsilon},\ 
\overline y_\varepsilon=\frac{1-\varepsilon}{\varepsilon}
\)
into the definitions of $m_\varepsilon$ and $M_\varepsilon$ in Theorem~\ref{thm:struct-narrow} yields the expressions indicated in the statement.
Plugging them into \eqref{eq:corridor} gives the claimed explicit inequality.
\end{proof}

\section{Exponential Absolute Growth under Mutual Cooperation}
The model also allows evaluation of the emergent effect that arises in the system under mutual cooperation.

\begin{theorem}[Exponential cooperation premium]\label{thm:premium}
Consider the original linear Osipov--Lanchester system
\[
\dot R=-\beta B,\qquad \dot B=-\alpha R,
\quad
z(t)=\begin{pmatrix}R(t)\\ B(t)\end{pmatrix},\quad
S=\begin{pmatrix}0&-\beta\\ -\alpha&0\end{pmatrix}.
\]
Then the following holds.
\begin{enumerate}
\item \emph{(Explicit solution)} The solution satisfies $z(t)=e^{tS}z(0)$, and $S^2=\alpha\beta\,I$, hence
\[
e^{tS}=
\begin{cases}
\displaystyle \cosh\!\big(\sqrt{\alpha\beta}\,t\big)\,I+\dfrac{\sinh\!\big(\sqrt{\alpha\beta}\,t\big)}{\sqrt{\alpha\beta}}\,S,& \alpha\beta>0,\\[2mm]
I+tS,& \alpha\beta=0.
\end{cases}
\]
\item \emph{(Exponential growth/decay)} For $\alpha\beta>0$
\[
\lim_{t\to\infty}\frac{1}{t}\log\|z(t)\|=\sqrt{\alpha\beta},
\]
that is, the magnitude of the solution grows/decays with exponent $\sqrt{\alpha\beta}$.
\item \emph{(Boundary case)} If $\alpha=0$ or $\beta=0$, then $S^2=0$ and $e^{tS}=I+tS$, whence the solution grows at most linearly.
\item \emph{(Comparison)} If $(\alpha,\beta)$ and $(\tilde\alpha,\tilde\beta)$ are such that $\alpha\beta>0$ and $\tilde\alpha\tilde\beta=0$, then for any initial conditions $z(0)\neq0$
\[
\lim_{t\to\infty}\frac{1}{t}\log\frac{\|z_{\alpha,\beta}(t)\|}{\|z_{\tilde\alpha,\tilde\beta}(t)\|}
=\sqrt{\alpha\beta}>0,
\]
so the cooperative case is asymptotically ``stronger'' than the linear one.
\end{enumerate}
\end{theorem}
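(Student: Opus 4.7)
The plan is to reduce the theorem to the single identity $S^2=\alpha\beta\,I$ and then read off all four claims. First I would verify $S^2=\alpha\beta\,I$ by direct $2{\times}2$ multiplication. This determines all powers of $S$ (namely $S^{2k}=(\alpha\beta)^k I$, $S^{2k+1}=(\alpha\beta)^k S$), so splitting the Taylor series of $e^{tS}$ into even and odd parts sums each piece to an elementary function: $\cosh(\sqrt{\alpha\beta}\,t)\,I$ and $\sinh(\sqrt{\alpha\beta}\,t)\,S/\sqrt{\alpha\beta}$ when $\alpha\beta>0$, collapsing to $I+tS$ when $\alpha\beta=0$. This simultaneously proves parts (1) and (3).

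For part (2) I would recast the hyperbolic form spectrally as
\[
e^{tS} \;=\; e^{\sqrt{\alpha\beta}\,t}\,P_+ \;+\; e^{-\sqrt{\alpha\beta}\,t}\,P_-,\qquad
P_\pm \;:=\; \tfrac12\bigl(I \pm S/\sqrt{\alpha\beta}\bigr),
\]
and check, using $S^2=\alpha\beta\,I$, that $P_\pm^2=P_\pm$, $P_+P_-=0$, $P_++P_-=I$, so that $P_\pm$ are the complementary spectral projectors onto the $\pm\sqrt{\alpha\beta}$-eigenspaces of $S$. Applied to the initial datum this yields $z(t)=e^{\sqrt{\alpha\beta}\,t}P_+z_0+e^{-\sqrt{\alpha\beta}\,t}P_-z_0$, from which $\tfrac1t\log\|z(t)\|\to\sqrt{\alpha\beta}$ as soon as $P_+z_0\neq 0$. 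Part (4) is then an immediate consequence: subtract the $\sqrt{\alpha\beta}\,t+O(1)$ growth given by (2) from the $O(\log t)$ bound given by $(I+tS)z_0$ in the boundary case of (3), and divide by $t$.

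The main obstacle I anticipate is not computational but the quantifier ``for any $z(0)\neq 0$'' in (2) and (4). In the attritive regime $\alpha,\beta>0$ the projector $P_+$ annihilates a one-dimensional line of initial data on which the solution actually decays at rate $-\sqrt{\alpha\beta}$, so the strict universal reading fails there. In the cooperative regime $\alpha,\beta<0$, which is the regime of interest, the $+\sqrt{\alpha\beta}$-eigenvector of $S$ lies in the open first quadrant, so every physical $z_0\in\mathbb R^2_{>0}$ has a strictly positive projection $P_+z_0$ and the stated limit is genuinely attained. I would therefore carry the argument through under the mild non-degeneracy $P_+z_0\neq 0$, automatic in the setting Theorem~\ref{thm:premium} is designed to compare against Theorem~\ref{thm:classification}, rather than attempt to prove the limit uniformly over every $z_0\neq 0$.
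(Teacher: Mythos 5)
Your argument follows essentially the same route as the paper: verify $S^2=\alpha\beta\,I$ by direct multiplication, split the exponential series into even and odd parts to obtain parts (1) and (3), and use the spectral decomposition of $S$ for parts (2) and (4). The one substantive difference is that you make the spectral projectors $P_\pm=\tfrac12\bigl(I\pm S/\sqrt{\alpha\beta}\bigr)$ explicit and check their algebra, whereas the paper only remarks that the solution ``decomposes over eigenvectors and behaves as $\exp(\pm\sqrt{\alpha\beta}\,t)$ in modulus.'' Your extra care pays off: you correctly observe that the limit in (2), and hence the comparison in (4), requires $P_+z(0)\neq 0$, and that for $\alpha>0,\ \beta>0$ the kernel of $P_+$ --- the $-\sqrt{\alpha\beta}$-eigenline, spanned by $\bigl(1,\sqrt{\alpha/\beta}\bigr)^{\!\top}$ --- lies in the open positive quadrant, so the quantifier ``for any initial conditions $z(0)\neq0$'' in (4) is literally false in the attritive regime (the solution there decays at rate $-\sqrt{\alpha\beta}$). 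The paper's proof does not address this degenerate set. In the cooperative regime $\alpha<0,\ \beta<0$, which is the case the theorem is meant to highlight, the $+\sqrt{\alpha\beta}$-eigenvector lies in the open first quadrant, so $P_+z_0\neq0$ for every $z_0$ with $R(0),B(0)>0$, and your version of (2) and (4) under the non-degeneracy hypothesis is both correct and complete; it is in fact the statement the theorem should have made.
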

\begin{proof}
The matrix $S$ has the form
\(
S=\begin{pmatrix}0&-\beta\\ -\alpha&0\end{pmatrix}
\)
and it is easy to check that
\(
S^2=\alpha\beta\,I.
\)
Then the power series for the exponential splits into even and odd parts:
\[
e^{tS}
=\sum_{k=0}^\infty \frac{t^{2k}}{(2k)!}S^{2k} + \sum_{k=0}^\infty \frac{t^{2k+1}}{(2k+1)!}S^{2k+1}
=\sum_{k=0}^\infty \frac{(\alpha\beta)^{k} t^{2k}}{(2k)!}I
+ \frac{S}{\sqrt{\alpha\beta}}\sum_{k=0}^\infty \frac{(\alpha\beta)^{k} t^{2k+1}}{(2k+1)!}.
\]
For $\alpha\beta>0$ these are precisely the formulas for $\cosh$ and $\sinh$, yielding item (1). For $\alpha\beta=0$ only the first two terms remain, giving $e^{tS}=I+tS$.

For item (2), note that when $\alpha\beta>0$ the eigenvalues of $S$ are $\pm\sqrt{\alpha\beta}$, so the solution decomposes over eigenvectors and behaves as $\exp(\pm\sqrt{\alpha\beta}\,t)$ in modulus, which yields the indicated growth rate.

Items (3) and (4) follow directly from the explicit formula and the comparison of exponential and linear growth.
\end{proof}

The possibility of exponential solutions is retrospectively explained by technological progress, which for several centuries has ensured exponential growth of resources.

\section{Conclusion}

A closed extension of the Osipov--Lanchester model to all parameter values $\alpha, \beta$ with a constant total population $N$ is presented that describes both confrontational and cooperative regimes. A simple reduction to the Riccati equation provides a complete analytical study and a rigorous classification of stable invariant sets. This is a qualitative minimal model; empirical calibration is outside the scope of this work.

The political-economic terminology used throughout follows Acemoglu's ``Narrow Corridor'' as an interpretive framework and is intended as
a descriptive mapping of mathematically defined regimes, not as a normative or empirical claim. 
If $R$ is identified as the ``strength of the state'' and $B$ as the ``strength of society'', then for $\alpha\leq0,\,\beta\leq0$ the magnitudes $|\alpha|$ and $|\beta|$ can be interpreted as the state's ``inclusiveness'' and society's ``patriotism'', respectively, in Acemoglu's terminology~\cite{acemoglu2012whynationsfail}.  
Then the three stable regimes of Theorem~\ref{thm:classification} are interpreted as:
\begin{enumerate}
    \item $\alpha<0,\beta<0$ --- stable balance (``democracy'');
    \item $\alpha=0,\beta<0$ --- the stable edge $B=0$ (``autocracy'');
    \item $\alpha<0,\beta=0$ --- the stable edge $R=0$ (``anarchy'').
\end{enumerate}
The term ‘autocracy’ is used as a broad label for state-dominant regimes, which includes the ‘despotic Leviathan’ in Acemoglu and Robinson’s terminology. The term 'democracy' refers to any form of stable cooperation between the state and society. This is consistent with Acemoglu's intuition of the ``narrow corridor''\cite{AcemogluRobinson2019} and illustrates the absence of an ``end of history'' in the sense of Fukuyama~\cite{Fukuyama1992EndOfHistoryLastMan}. A practically important consequence is that stable balance is possible only under mutually cooperative interaction ($\alpha,\beta<0$ in the chosen parameterization); extreme regimes become attractors when one of the channels degenerates, and confrontation leads beyond the model's applicability (Figure~\ref{fig:qual}). 

Furthermore, when passing to the system perturbed in $\alpha(t),\beta(t)$, buffer dynamics (Corollary~\ref{cor:buffer}) repelling from the axes are required to preserve stable solutions, which can be interpreted as a formal analogue of Acemoglu's intuitive ``narrowness'' of the stability corridor. This underscores the institutional importance of power rotation mechanisms and mutual constraints as safeguards of cooperation.

It is also shown by Theorem~\ref{thm:premium} that for $\alpha<0,\beta<0$ the solution of the system in absolute population values exhibits exponential growth, whereas the other two solutions grow only polynomially. However, this is possible only under exponential growth of available resources. This may explain both the practical advantages of remaining in the corridor when resources are abundant and the strong connection between ``democracy'' and technological progress; when the benefits weaken, the incentive to remain in the corridor diminishes.

\begin{figure}[ht]
\centering

\begin{minipage}{0.48\textwidth}
\centering
\includegraphics[width=\linewidth]{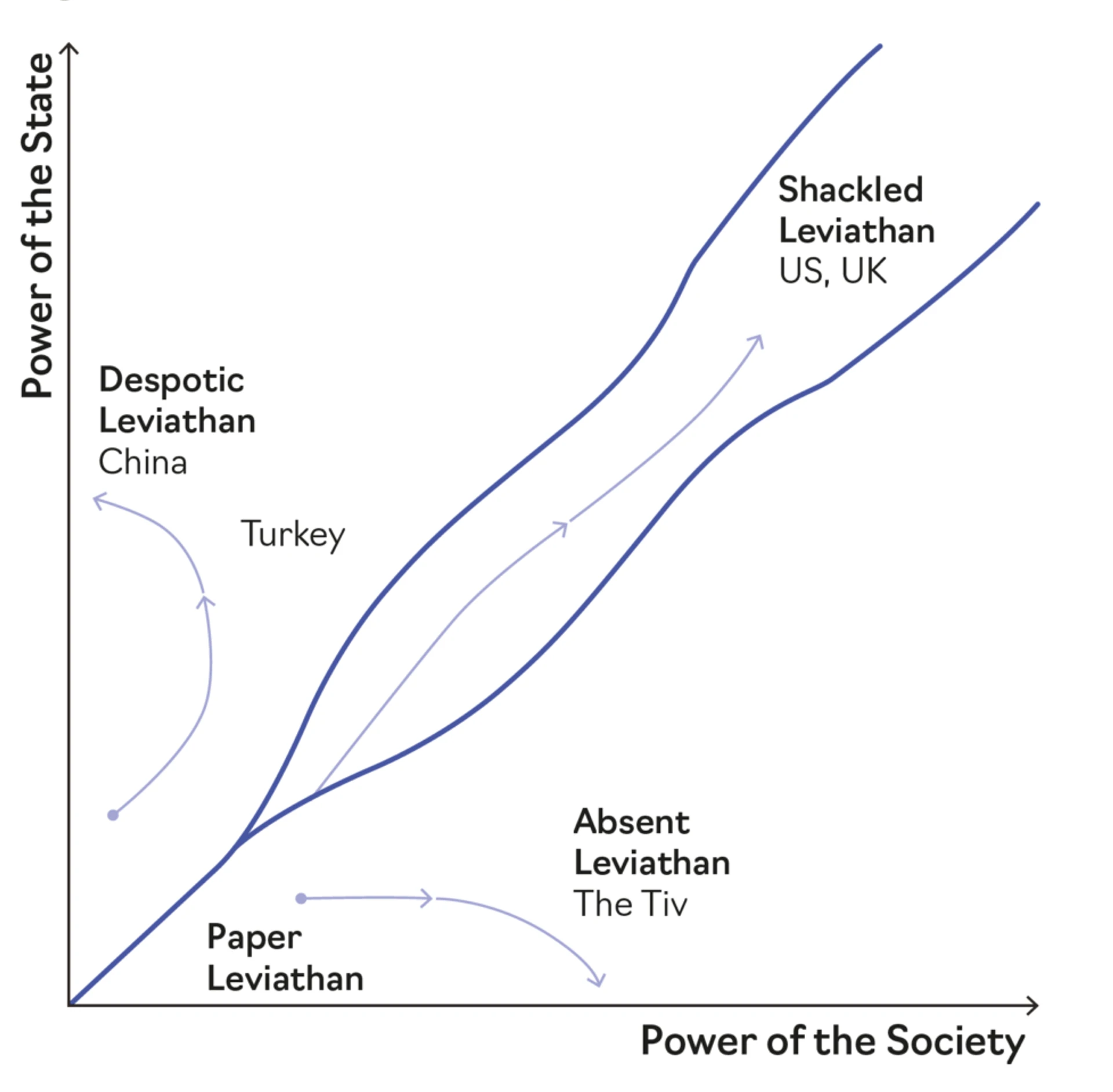}
\end{minipage}
\hfill
\begin{minipage}{0.48\textwidth}
\begin{tikzpicture}[scale=1.1, >=stealth]

  \draw[->] (-3,0) -- (3.4,0) node[below] {$-\alpha$};
  \draw[->] (0,-3) -- (0,3.4) node[left] {$-\beta$};

  \draw[thick] (-3,-3) rectangle (3,3);

  \fill[green!20] (0,0) rectangle (3,3);
  \node[rotate=45] at (1.6,1.6) {\textbf{democracy}};

  \fill[red!6] (-3,0) rectangle (0,3);
  \fill[red!6] (-3,-3) rectangle (0,0);
  \fill[red!6] (0,-3) rectangle (3,0);

  \node[rotate=45] at (-1.5,1.6) {\textbf{confrontation}};
  \node[rotate=45] at (-1.5,-1.6) {\textbf{confrontation}};
  \node[rotate=45] at (1.5,-1.6) {\textbf{confrontation}};

  \fill[orange!40, draw=orange!80!black] (0,0) rectangle (0.4,3);
  \node[rotate=90] at (0.2,1.5) {\textbf{autocracy}};
  
  \fill[orange!40, draw=orange!80!black] (0,0) rectangle (3,0.4);
  \node at (1.5, 0.2) {\textbf{anarchy}};

  \node at (1.5,-0.2) {inclusiveness};
  \node at (-1.5,-0.2) {extractiveness};
  \node[rotate=90] at (-0.2,1.5) {patriotism};
  \node[rotate=90] at (-0.2,-1.5) {disloyalty};
  \fill[white!20] (-3,-3.9) rectangle (6,-3.1);
\end{tikzpicture}
\end{minipage}
\caption{(left) The Narrow Corridor~\cite{AcemogluRobinson2019}, (right) Regimes in the $(-\alpha,-\beta)$ axes: the green quadrant $(\alpha<0,\,\beta<0)$ is the democratic corridor (patriotism and inclusiveness); the strip $(\alpha=0,\,\beta<0)$ is autocracy with zero inclusiveness; the strip $(\alpha<0,\,\beta=0)$ is anarchy with zero patriotism; other regions correspond to confrontation.}
\label{fig:qual}
\end{figure}

\bibliographystyle{unsrt}
\bibliography{main}

\end{document}